\theoremstyle{plain}
    \newtheorem{theorem}{Theorem}[section]
    \newtheorem*{introtheorem}{Theorem}
    \newtheorem*{introconjecture}{Conjecture}
    \newtheorem{lemma}[theorem]{Lemma}
\theoremstyle{definition}
\theoremstyle{remark}
\begin{document}
\title{Linear restrictions on cone polynomials}
\author{Weibo Fu}
\address{W. Fu: Department of Mathematics, The University of Science and Technology of China, Hefei, Anhui, China}
\email{fuweibo@mail.ustc.edu.cn}
\author{Zipei Nie}
\address{Z. Nie: Department of Mathematics, Princeton University, Princeton NJ}
\email{znie@math.princeton.edu}
\date{\today}
\begin{abstract}
For a set $S$ of $d$ points in the $n$-dimensional projective space over a field of characteristic zero, we prove that the polynomials of degree $d$ whose zero sets are cones over $S$ do not span the vector space of polynomials of degree $d$ vanishing on $S$, if $d$ is odd and $d\ge 3$. Furthermore, they span a subspace of codimension at least two, if $n=2$, $d=1\pmod 4$ and $d\ge 5$.
\end{abstract}
\maketitle
\section{Introduction}
The sets of finite generic points in the projective space have been studied extensively and serve as fundamental examples in the multivariate polynomial interpolation theory, where people are particularly interested in the vanishing ideals of those sets. See, e.g., \cite{ciliberto}.

In our setting, let $S=\{P_i:1\le i\le d\}$ be a set of $d$ generic points in the projective space $\mathbf{P}^n$ over a field $\mathbb{F}$ of characteristic zero. To construct a homogeneous polynomial of degree $d$ vanishing on $S$, we may first choose an $(n-2)$-plane $\pi$ in $\mathbf{P}^n$, and then take the product of $d$ linear functions $L_i$, such that $L_i$ vanishes at $P_i$ and $\pi$, where $i$ is an integer with $1\le i \le d$. The polynomials obtained by the proceeding procedure are called \textbf{cone polynomials}. We concern whether the cone polynomials span the vector space of the homogeneous polynomials of degree $d$ vanishing on $S$.

It is a standard fact that the vector space of the homogeneous polynomials of degree $d$ vanishing on $S$ has the expected dimension $\binom{d+n}{n}-d$.  Let $V(n,d)$ denote the span of the cone polynomials, then our problem is equivalent to ask whether we have $$\dim V(n,d)= \binom{d+n}{n}-d.$$

In the case where $n=2$, motivated by numerical experiments which were firstly done by Bo Ilic, it was conjectured that the answer is affirmative if and only if $d$ is even or $d=1$ (personal communication with Robert Lazarsfeld, December 2014).

\begin{introconjecture}
For each integer $d$ with $d\ge 2$, we have $$\dim V(2,d)=\begin{cases} \binom{d+2}{2}-d &\mbox{, if }d=0\pmod 2\\\binom{d+2}{2}-d-2 &\mbox{, if }d=1\pmod 4\\\binom{d+2}{2}-d-1 &\mbox{, if }d=3\pmod 4. \end{cases}$$
\end{introconjecture}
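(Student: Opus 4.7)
The plan is to repackage the family of cone polynomials into a single bi-graded polynomial and interpret $V(2,d)$ as the image of a linear contraction. Set
\[
F(x, a) = \prod_{i=1}^{d} \det(x \mid P_i \mid a),
\]
a polynomial of bidegree $(d,d)$ in $x$ and $a$; specialising $a$ to the coordinates of $\pi$ recovers $F_\pi(x)$. Each determinant factor changes sign under $x \leftrightarrow a$, so $F(a, x) = (-1)^d F(x, a)$. Since $F_\pi$ vanishes on $S$ for every $\pi$, each coefficient of $a^\alpha$ lies in $I_d(S)$; the (anti)symmetry then places $F \in I_d(S) \otimes I_d(S)$. Hence $V(2,d)$ equals the image of the linear map $\Phi : I_d(S)^* \to I_d(S)$ obtained by contracting $F$ against an element of $I_d(S)^*$, and the whole question reduces to computing $\rank(\Phi)$. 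A direct count gives $\dim I_d(S) = (d^2 + d + 2)/2$.

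For $d$ odd, $\Phi$ is skew-symmetric under the canonical identification $I_d(S)^{**} = I_d(S)$, so $\rank(\Phi)$ is even. When $d \equiv 3 \pmod 4$, $\dim I_d(S)$ is odd and this immediately forces codimension at least one. When $d \equiv 1 \pmod 4$, $\dim I_d(S)$ is even and extra input is needed to force codimension two. I would restrict cone polynomials to a minimal curve containing $S$ (the unique conic through the five points when $d = 5$, and more generally the lowest-degree curve through $S$); after restriction, $F_\pi$ is governed by the second-intersection points of the chords $P_i\pi$ with this curve, and these sweep out only a proper linear subspace of the ambient symmetric product. The resulting gap should yield two null vectors of $\Phi$ via a Pfaffian/Gorenstein-duality argument attached to the Hilbert--Burch resolution of $I(S)$.

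To match these lower bounds on the codimension with corresponding upper bounds, I would use that $\rank(\Phi)$ is lower semicontinuous on the configuration $S$ and specialise to a highly symmetric $S$---for instance $d$ equally spaced points on a conic with a cyclic group acting simultaneously on $S$ and on the vertex $\pi$---decompose $I_d(S)$ into isotypic components for that symmetry, and verify by direct calculation that each isotypic piece is hit by appropriately averaged cone polynomials. A parallel inductive strategy sends $\pi$ along the chord $P_iP_j$, forcing $L_i = L_j$ and factoring $F_\pi = L_{ij}^{2} G_\pi$ where $G_\pi$ is a cone polynomial of degree $d-2$ on $S \setminus \{P_i, P_j\}$; this reduces the span question on $d$ points to the analogous one on $d-2$ points.

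The principal difficulty lies in the $d \equiv 1 \pmod 4$ case: the parity argument cleanly yields codimension at least one whenever $d$ is odd, but the extra codimension for $d \equiv 1 \pmod 4$ demands a genuine geometric mechanism that is invisible to the symmetry of $F$ alone, and the matching upper bound must then be supplied by a careful residue-class-dependent specialisation or induction.
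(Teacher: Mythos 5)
First, be aware that the statement you are attacking is stated in the paper as a \emph{conjecture}: the paper does not prove it, and only establishes the one-sided inequalities $\dim V(n,d)\le\binom{d+n}{n}-d-1$ for odd $d\ge 3$ and $\dim V(2,d)\le\binom{d+2}{2}-d-2$ for $d\equiv 1\pmod 4$, $d\ge 5$. Your central device --- packaging the cone polynomials into the bidegree-$(d,d)$ form $F(x,a)=\prod_i\det(x\mid P_i\mid a)$, observing $F(a,x)=(-1)^dF(x,a)$, and concluding that the rank of the resulting contraction $\Phi$ is even when $d$ is odd --- is exactly the paper's Section 3 (the skew-symmetric matrix $M(d)$). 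But your route to the first defect is strictly weaker than the paper's. Parity of $\rank\Phi$ against $\dim I_d(S)=(d^2+d+2)/2$ only forces a defect when that dimension is odd, i.e.\ when $d\equiv 3\pmod 4$; for $d\equiv 1\pmod 4$ it says nothing at all, which is why you are left needing ``extra input.'' The paper's key missing ingredient here is an explicit annihilating functional: for odd $d$ the operator $f\mapsto\bigl(\prod_{i=1}^d\tilde P_i\cdot\nabla\bigr)f$ kills every cone polynomial (by an antisymmetrization of the Leibniz expansion), yet is nonzero on an explicit $f_0\in I_d(S)$ built from triangle determinants and evaluated via Vandermonde at $\tilde P_i=(1,i,i^2)$. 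That gives codimension $\ge 1$ for \emph{all} odd $d\ge 3$, and only then does the evenness of $\rank\Phi$ upgrade it to codimension $\ge 2$ precisely when the target dimension is even, i.e.\ when $d\equiv 1\pmod 4$. Your substitute for this step (restriction to a minimal curve through $S$, Pfaffians, Gorenstein duality of the Hilbert--Burch resolution) is a sketch with no argument behind it, and nothing in the proposal produces an actual element of $I_d(S)$ outside the span of the cone polynomials.

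The other half of the conjecture --- that these codimensions are exact, and that there is no defect at all for even $d$ --- is precisely what remains open, and your plan for it is only a plan. Lower semicontinuity of rank plus a single well-chosen configuration is indeed the right shape of argument for a lower bound on $\dim V(2,d)$, but no configuration is analyzed and no isotypic decomposition is computed; moreover special configurations (e.g.\ points on a conic with a cyclic symmetry) must separately be shown to have the generic Hilbert function before $\dim I_d(S)$ can be quoted. The degeneration $\pi\to P_iP_j$ giving $F_\pi=L_{ij}^2G_\pi$ relates $d$ to $d-2$, which swaps the residues $1$ and $3$ modulo $4$, so such an induction would have to account for the codimension changing by one at each step --- a point the proposal does not address. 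As written, the proposal rigorously yields at most ``codimension $\ge 1$ when $d\equiv 3\pmod 4$'' (granting the identification $V(2,d)=\operatorname{im}\Phi$, which the paper does verify carefully and you assert), which is less than what the paper already proves.
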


This conjecture states that there is exactly one hidden linear restriction on the cone polynomials if $n=2$ and $d=3\pmod 4$, and there are exactly two if $n=2$, $d=1\pmod 4$ and $d\ge 5$. We prove the existence of these linear restrictions in subsequent sections.

In Section \ref{first-part}, we show the following theorem holds, which means the answer to our original problem is negative when $d$ is odd and $d\ge 3$.

\begin{introtheorem}
For each odd integer $d$ with $d\ge 3$, we have $$\dim V(n,d)\le \binom{d+n}{n}-d-1.$$
\end{introtheorem}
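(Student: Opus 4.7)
My plan is to exhibit a single linear functional $\Lambda$ on $H^0(\mathbf{P}^n, \mathcal{O}(d))$ that vanishes on every cone polynomial (hence on $V(n,d)$) but does not vanish identically on the degree-$d$ part of the vanishing ideal of $S$; its existence forces $\dim V(n,d) \leq \binom{d+n}{n} - d - 1$. The natural candidate is the $d$-fold polarization at the configuration $S$,
$$\Lambda(f) := \bigl(\partial_{P_1} \partial_{P_2} \cdots \partial_{P_d} f\bigr)(0),$$
where $\partial_{P_i}$ is the constant-coefficient directional derivative in the direction of a lift of $P_i$; for $f$ homogeneous of degree $d$ this is just $d!$ times the value of the polar form of $f$ at $(P_1, \ldots, P_d)$.

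The key computation is that, by iterated Leibniz, $\Lambda(L_1 L_2 \cdots L_d) = \mathrm{perm}(M)$ with $M_{ij} = L_i(P_j)$. For a cone polynomial with vertex the $(n-2)$-plane $\pi = \langle Q_1, \ldots, Q_{n-1} \rangle$ I would use the canonical normalization $L_i(x) = \det(x, P_i, Q_1, \ldots, Q_{n-1})$; swapping the first two columns gives $L_i(P_j) = -L_j(P_i)$, so $M$ is skew-symmetric. For odd $d$, pairing $\sigma \in S_d$ with $\sigma^{-1}$ in the expansion of $\mathrm{perm}(M)$ cancels the non-involution terms in pairs (the two contributions differ by $(-1)^d = -1$), while every involution of an odd-size set fixes some index and so contributes zero since the diagonal of $M$ vanishes. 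Hence $\mathrm{perm}(M) = 0$ and $\Lambda$ annihilates all of $V(n, d)$.

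It remains to show $\Lambda|_{I_S^d} \not\equiv 0$, which I expect to be the main obstacle. Via the apolarity pairing on $\mathrm{Sym}^d W^*$ with $W = H^0(\mathcal{O}(1))$, $\Lambda$ corresponds to $P_1 P_2 \cdots P_d$ and the evaluation $\mathrm{ev}_{P_i}$ corresponds to $P_i^d$; for generic $S$ the annihilator of $I_S^d$ is spanned by $\{\mathrm{ev}_{P_i}\}$, so the claim reduces to $P_1 \cdots P_d \notin \mathrm{span}\{P_j^d\}_{j=1}^d$ in $\mathrm{Sym}^d W^*$. Changing coordinates so that $P_1 = e_0$ and reducing modulo $P_1$, a hypothetical relation $\prod_i P_i = \sum_j c_j P_j^d$ collapses to $\sum_{j \geq 2} c_j \overline{P}_j^d = 0$ in $\mathrm{Sym}^d(\F^n)$; for $n \geq 2$ one has $d - 1 \leq \binom{d+n-1}{n-1}$, so these Veronese powers are linearly independent for generic points, forcing $c_j = 0$ for $j \geq 2$, and the surviving relation $P_2 \cdots P_d = c_1 P_1^{d-1}$ is impossible for $d \geq 3$ with generic points. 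The whole argument isolates the role of the parity of $d$ in the single elementary identity that the permanent of an odd-size skew-symmetric matrix vanishes.
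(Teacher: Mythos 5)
Your argument is correct in substance, and it splits the same way the paper's does: a linear functional $\Lambda=\partial_{P_1}\cdots\partial_{P_d}$ annihilating $V(n,d)$, plus a witness that $\Lambda$ is nonzero on the degree-$d$ piece of the vanishing ideal. The first half is essentially identical to the paper's Lemma \ref{relation}, down to pairing $\sigma$ with $\sigma^{-1}$ to kill the permanent of an odd-order skew-symmetric matrix. One small omission there: your canonical normalization $L_i(x)=\det(x,P_i,Q_1,\ldots,Q_{n-1})$ is identically zero when $P_i\in\pi$, whereas the definition of a cone polynomial allows $\pi$ to pass through a point of $S$ with $L_i$ a nonzero form vanishing on $\pi$; in that case skew-symmetry is not available as stated. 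The paper dispatches this in one line: if $P_1\in\pi$ then every $L_j$ vanishes at $P_1$, so the matrix $(L_i(\tilde{P}_j))$ has a zero column and the permanent vanishes anyway. You should add that sentence.

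The second half is where you genuinely diverge, and your route works. The paper proves nonvanishing by exhibiting the explicit polynomial $f_0(v)=\prod_i\det[\phi(v),\phi(\tilde{P}_i),\phi(\tilde{P}_{i+1})]$ (cyclic indices) and evaluating $\Lambda(f_0)$ with the $\tilde{P}_i$ on the moment curve $(1,i,i^2,\ldots,i^n)$, where the Vandermonde identity makes the sum over $\mathcal{S}_d$ a sum of nonnegative integers with one visibly positive term. You instead dualize: under apolarity $\Lambda$ corresponds to $\tilde{P}_1\cdots\tilde{P}_d$, the annihilator of $I_S^d$ is $\mathrm{span}\{\tilde{P}_j^d\}$ (since $d$ distinct points impose independent conditions in degree $d$, the $d$-th powers are independent and must span the $d$-dimensional annihilator), and the claim becomes that the squarefree product of the $\tilde{P}_j$ is not a combination of their $d$-th powers. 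Reducing modulo $\tilde{P}_1$ kills the product, the $d$-th powers of the $d-1$ generic images remain independent because $d-1\le\binom{d+n-1}{n-1}$ and generic points of a nondegenerate variety are in linearly general position, and unique factorization rules out $\tilde{P}_2\cdots\tilde{P}_d=c_1\tilde{P}_1^{d-1}$. This is more conceptual and arguably cleaner than the paper's computation (and, like the paper's Lemma \ref{independence}, it is valid for all $d\ge 2$, not just odd $d$), at the cost of invoking the apolarity formalism; both arguments use only the genericity of $S$ already assumed in the paper.
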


In Section \ref{second-part}, we prove the existence of an extra linear restriction when $n=2$, $d=1\pmod 4$ and $d\ge 5$. In other words, we prove the following theorem.

\begin{introtheorem}
For each integer $d$ with $d=1\pmod 4$ and $d\ge 5$, we have $$\dim V(2,d)\le\binom{d+2}{2}-d-2.$$
\end{introtheorem}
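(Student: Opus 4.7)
The plan is to combine the codimension-one bound of the previous theorem with a parity obstruction that is available specifically when $n=2$ and $d$ is odd.

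First I would encode all cone polynomials in a single bihomogeneous form. Fix projective coordinates $X=(X_0,X_1,X_2)$ and $\pi=(\pi_0,\pi_1,\pi_2)$ on two copies of $\mathbf{P}^2$. For each $1\le i\le d$, the linear form $L_i(X,\pi)=\det[X\mid\pi\mid P_i]$ is bilinear in $(X,\pi)$ and vanishes when $X=\pi$ and when $X=P_i$, so
\[
F(X,\pi)\;=\;\prod_{i=1}^d L_i(X,\pi)
\]
is bihomogeneous of bi-degree $(d,d)$ and its specialization $F(\cdot,\pi)$ is the cone polynomial with apex $\pi$. Writing $F=\sum h_{\beta,\alpha}\,X^\beta \pi^\alpha$, the subspace $V(2,d)$ is spanned by the $\pi$-coefficients of $F$, hence coincides with the column span of the square matrix $(h_{\beta,\alpha})$ under the identification of a column with the corresponding degree-$d$ polynomial in $X$.

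Next I would exploit the antisymmetry $L_i(X,\pi)=-L_i(\pi,X)$ of each factor, which yields $F(X,\pi)=(-1)^d F(\pi,X)$. For $d$ odd this forces the matrix $(h_{\beta,\alpha})$ to be skew-symmetric, so its rank, and therefore $\dim V(2,d)$, must be an even integer. To finish, note that for $d\equiv 1\pmod 4$ the integer $\binom{d+2}{2}-d$ is even: since $d+1\equiv 2\pmod 4$ and $d+2$ is odd, $\binom{d+2}{2}=\tfrac{d+1}{2}(d+2)$ is odd, so $\binom{d+2}{2}-d$ is even. Combined with the previous theorem's bound $\dim V(2,d)\le\binom{d+2}{2}-d-1$, the parity constraint forces the stronger inequality $\dim V(2,d)\le\binom{d+2}{2}-d-2$.

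The only nontrivial step is isolating the skew-symmetry of $F(X,\pi)$; once that is in hand, the rest is routine linear algebra (rank of a skew-symmetric matrix is even) together with an elementary modular arithmetic check, so I do not anticipate a genuine obstacle.
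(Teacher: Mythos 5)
Your proposal is correct and follows essentially the same route as the paper: the bihomogeneous form $\prod_{i=1}^d l_i$ in the point and apex variables, skew-symmetry of its coefficient matrix for odd $d$, evenness of the rank of a skew-symmetric matrix, and the parity of $\binom{d+2}{2}-d$ for $d\equiv 1\pmod 4$ combined with the codimension-one bound. The only detail you gloss over, and which the paper treats explicitly, is that a cone polynomial whose apex is one of the points of $S$ is not a specialization of $F$ (that specialization vanishes identically) but rather a specialization of a directional derivative of $F$ in the apex variables, which is still in the span of the coefficients, so the identification of $V(2,d)$ with the column span of the matrix survives.
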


\section{The first linear restriction}\label{first-part}
For each integer $i$ with $1\le i \le d$, let $\tilde{P}_i$ in $\mathbb{F}^{n+1}\backslash \{0\}$ represent $P_i$ in homogeneous coordinates, and let $\tilde{P}_i\cdot\nabla$ denote the directional derivative operator along $\tilde{P}_i$.

Suppose that $\{Q_j\in \mathbf{P}^n: 1\le j\le n-1\}$ is an affine basis of the $(n-2)$-plane $\pi$. For each integer $j$ with $1\le j \le n-1$, let $\tilde{Q}_j$ in $\mathbb{F}^{n+1}\backslash \{0\}$ represent $Q_j$ in homogeneous coordinates. For each integer $i$ with $1\le i\le d$, define the linear function $l_{i}$ in $\mathbb{F}[x_1,\ldots,x_{n+1}]$ by $l_i (v)=\det[v,\tilde{P}_i,\tilde{Q}_1,\ldots, \tilde{Q}_{n-1}]$, then $l_{i}$ vanishes at $P_i$ and $\pi$. In addition, $l_i$ is nonzero if and only if $P_i\not \in \pi$.

Since $(\tilde{P}_i\cdot\nabla) l_{i'}=-(\tilde{P}_{i'}\cdot\nabla) l_i=\det[\tilde{P}_i,\tilde{P}_{i'},\tilde{Q}_1,\ldots, \tilde{Q}_{n-1}]$ for each pair of integers $(i,i')$ with $1\le i,i'\le d$. If $d$ is odd, then
\begin{align*}
&\left(\prod_{i=1}^d \tilde{P}_i\cdot\nabla\right)\left(\prod_{i=1}^d l_i\right)\\
=&\sum_{\sigma \in \mathcal{S}_d}\prod_{i=1}^d (\tilde{P}_i\cdot\nabla) l_{\sigma(i)}\\
=&\frac{1}{2}\sum_{\sigma \in \mathcal{S}_d}\left(\prod_{i=1}^d(\tilde{P}_i\cdot\nabla) l_{\sigma(i)}-\prod_{i=1}^d(\tilde{P}_{\sigma(i)}\cdot\nabla) l_i \right)\\
=&\frac{1}{2}\sum_{\sigma \in \mathcal{S}_d}\prod_{i=1}^d (\tilde{P}_i\cdot\nabla) l_{\sigma(i)}-\frac{1}{2}\sum_{\sigma^{-1}\in \mathcal{S}_d}\prod_{i=1}^d (\tilde{P}_i\cdot\nabla) l_{\sigma(i)}\\
=&0,
\end{align*}
where, as usual, $\mathcal{S}_d$ stands for the symmetric group of degree $d$.

Following the construction of cone polynomials, for each integer $i$ with $1\le i\le d$, let $L_i$ be any linear function in $\mathbb{F}[x_1,\ldots,x_{n+1}]$ vanishing at $P_i$ and $\pi$. If $\pi\cap S=\emptyset$, then $L_i$ is a scalar multiple of $l_i$ for each integer $i$ with $1\le i\le d$. Hence if $\pi\cap S=\emptyset$ and $d$ is odd, then
$$\left(\prod_{i=1}^d \tilde{P}_i\cdot\nabla\right)\left(\prod_{i=1}^d L_i\right)=0.$$
Otherwise, we assume $P_1\in \pi$ without loss of generality, then $(\tilde{P}_1\cdot\nabla) L_i=0$ for each $i$ with $1\le i\le d$, thereby we still have $$\left(\prod_{i=1}^d \tilde{P}_i\cdot\nabla\right)\left(\prod_{i=1}^d L_i\right)=\sum_{\sigma \in \mathcal{S}_d}\prod_{i=1}^d (\tilde{P}_i\cdot\nabla) L_{\sigma(i)}=0.$$ Therefore the following lemma holds.

\begin{lemma}\label{relation}
For each odd positive integer $d$ and each cone polynomial $f$, we have
$$\left(\prod_{i=1}^d \tilde{P}_i\cdot\nabla\right) f=0.$$
\end{lemma}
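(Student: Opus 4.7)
The plan is to expand the differential operator applied to a cone polynomial $f=\prod_{i=1}^d L_i$ via the Leibniz rule and then exploit the antisymmetry encoded in the explicit determinantal description of the linear factors. Since each $L_i$ is linear, exactly one of the $d$ directional derivatives lands on each factor, which yields the clean expansion
$$\left(\prod_{i=1}^d \tilde{P}_i\cdot\nabla\right)\prod_{i=1}^d L_i=\sum_{\sigma\in\mathcal{S}_d}\prod_{i=1}^d(\tilde{P}_i\cdot\nabla)L_{\sigma(i)}.$$

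I would then split into two cases depending on whether the chosen $(n-2)$-plane $\pi$ meets $S$. In the generic case $\pi\cap S=\emptyset$, each $L_i$ is a nonzero scalar multiple of the determinantal form $l_i(v)=\det[v,\tilde{P}_i,\tilde{Q}_1,\ldots,\tilde{Q}_{n-1}]$ introduced earlier, so it suffices to prove the identity for $f=\prod_i l_i$. The key input here is the antisymmetry $(\tilde{P}_i\cdot\nabla)l_{i'}=-(\tilde{P}_{i'}\cdot\nabla)l_i$, which follows by swapping the first two columns of the defining determinant. Substituting this inside each summand and then reindexing $i\mapsto\sigma^{-1}(i)$ turns the term for $\sigma$ into $(-1)^d$ times the term for $\sigma^{-1}$; for odd $d$ this pairs the sum with its negative, forcing it to vanish.

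In the degenerate case, after relabeling we may assume $P_1\in\pi$; then every $L_i$ vanishes on $\pi$, so by linearity $(\tilde{P}_1\cdot\nabla)L_i=L_i(\tilde{P}_1)=0$ for every $i$. Hence every term in the permutation sum carries a factor of $(\tilde{P}_1\cdot\nabla)L_{\sigma(1)}=0$, and the conclusion is immediate.

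The parity of $d$ enters only once, through the sign $(-1)^d$ produced by applying the antisymmetry relation $d$ times, which is exactly why the argument breaks for even $d$. I do not expect a substantive obstacle; the only technical care lies in keeping the reindexing $\sigma\leftrightarrow\sigma^{-1}$ straight and in separating the two geometric cases cleanly, since the identification $L_i\propto l_i$ fails precisely when some $P_i$ lies on $\pi$.
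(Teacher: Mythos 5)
Your proposal is correct and follows essentially the same route as the paper: the Leibniz expansion into a sum over $\mathcal{S}_d$, the antisymmetry $(\tilde{P}_i\cdot\nabla)l_{i'}=-(\tilde{P}_{i'}\cdot\nabla)l_i$ combined with the reindexing $\sigma\leftrightarrow\sigma^{-1}$ to kill the sum for odd $d$, and the separate degenerate case $P_1\in\pi$ handled by the vanishing factor $(\tilde{P}_1\cdot\nabla)L_{\sigma(1)}=0$. No gaps.
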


Our next lemma shows that the linear relation in Lemma $\ref{relation}$ indeed gives us a nontrivial linear restriction on the cone polynomials if $d$ is odd and $d\ge 3$.
\begin{lemma}\label{independence} For each integer $d$ with $d\ge 3$, there exists a polynomial $f_0\in \mathbb{F}[x_1,\ldots, x_{n+1}]$ of degree $d$, vanishing on $S$, such that $$\left(\prod_{i=1}^d \tilde{P}_i\cdot\nabla\right)f_0 \neq 0.$$
\end{lemma}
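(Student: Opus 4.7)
I would reformulate the lemma dually. Under the natural pairing between $\mathrm{Sym}^d V^*$ (degree-$d$ polynomials in $n+1$ variables) and $\mathrm{Sym}^d V$, where $V = \mathbb{F}^{n+1}$, the functional $D \colon f \mapsto (\prod_{i=1}^d \tilde{P}_i \cdot \nabla) f$ corresponds to the symmetric product $d!\,\tilde{P}_1 \tilde{P}_2 \cdots \tilde{P}_d \in \mathrm{Sym}^d V$, while the evaluation functional $\mathrm{ev}_{P_i}$ corresponds to $\tilde{P}_i^d$. The lemma is therefore equivalent to
\[
\tilde{P}_1 \tilde{P}_2 \cdots \tilde{P}_d \notin \mathrm{span}(\tilde{P}_1^d, \ldots, \tilde{P}_d^d) \text{ in } \mathrm{Sym}^d V,
\]
which is a Zariski open condition on the tuple $(\tilde{P}_1, \ldots, \tilde{P}_d) \in V^d$; hence it is enough to exhibit a single configuration for which it holds, the lemma then following for generic $P_i$.

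For the range $d \le n + 1$, I take $\tilde{P}_i = e_i$ (the first $d$ standard basis vectors) and set $f_0 = x_1 x_2 \cdots x_d$. Each $f_0(e_j)$ contains a factor $x_i(e_j) = 0$ with $i \ne j$, so $f_0$ vanishes on $S$; and the direct computation $D f_0 = (\partial_1 \partial_2 \cdots \partial_d)(x_1 x_2 \cdots x_d) = 1$ settles this range.

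For $d > n + 1$, the $\tilde{P}_i$ are forced to be linearly dependent, so the diagonal specialization above is unavailable. Here I would attempt $f_0 = \prod_{i=1}^d L_i$ with each $L_i$ a linear form vanishing at $P_i$. A product-rule expansion parallel to the one in Lemma \ref{relation} yields $D f_0 = \mathrm{perm}[L_j(\tilde{P}_i)]_{i,j=1}^d$, the permanent of a matrix with zero diagonal. A natural choice (for $n \ge 2$) is to let $L_i$ additionally vanish at $P_{i+1 \bmod d}$, so that $L_j(\tilde{P}_i) = 0$ exactly when $i \in \{j, j+1 \bmod d\}$; the permanent then reduces to a sum over the ``ménage'' permutations $\sigma$ with $\sigma(i) \notin \{i, i-1 \bmod d\}$. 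Specializing $\tilde{P}_i = (1, t_i, t_i^2, \ldots, t_i^n)$ on the rational normal curve with real $t_1 < \cdots < t_d$ and normalizing $L_i(x) = \det[x, \tilde{P}_i, \tilde{P}_{i+1}, \tilde{Q}_1, \ldots, \tilde{Q}_{n-1}]$ for suitable auxiliary points $\tilde{Q}_k$, each nonzero entry becomes an explicit product of differences of $t$'s. For $n = 2$ a short sign check (accounting for the wrap-around at $j = d$) shows every nonzero entry is positive, so the permanent is a sum of positive terms and is nonzero whenever $d \ge 3$, since at least one long-cycle permutation always contributes.

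The principal obstacle is extending this sign analysis uniformly to $n \ge 3$, where the relevant products involve $n$ factors and the signs depend on the parity of $n$ as well as on whether the window of consecutive indices wraps around the cycle. A robust fallback is an inductive construction: given a valid $f'$ of degree $d - 1$ for $\{P_1, \ldots, P_{d-1}\}$, set $f_0 = f' \cdot M$ with $M$ a linear form vanishing at $P_d$, expand $D f_0$ via Leibniz as a linear function of $M$ on the $n$-dimensional space of forms vanishing at $P_d$, and use a genericity/dimension argument to exhibit an $M$ for which this function does not vanish, completing the induction.
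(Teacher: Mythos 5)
Your dual reformulation is correct and is a clean repackaging of the paper's setup: the lemma is equivalent to $\tilde{P}_1\cdots\tilde{P}_d\notin\operatorname{span}(\tilde{P}_1^d,\dots,\tilde{P}_d^d)$ in $\operatorname{Sym}^d\mathbb{F}^{n+1}$, and since for generic points the $\tilde{P}_i^d$ are linearly independent (the ``standard fact'' quoted in the introduction), this becomes the open condition that a $(d+1)\times\binom{d+n}{n}$ matrix have rank $d+1$, so a single witness configuration suffices. (Be careful that ``$v\notin\operatorname{span}(w_1,\dots,w_d)$'' is \emph{not} Zariski-open in general; you must route the openness through the rank condition or, as the paper does, through the nonvanishing of the explicit polynomial $(\prod_i\tilde{P}_i\cdot\nabla)f_0$ in the coordinates of the $\tilde{P}_i$.) Your case $d\le n+1$ is correct and elementary, and your $n=2$ case is essentially the paper's proof: the paper takes $f_0(v)=\prod_{i=1}^d\det[\phi(v),\phi(\tilde{P}_i),\phi(\tilde{P}_{i+1})]$ with indices mod $d$, specializes $\tilde{P}_i=(1,i,i^2,\dots,i^n)$, and observes that each term $(d-1)(d-\sigma(d))(\sigma(d)-1)\prod_{i=1}^{d-1}(\sigma(i)-i)(\sigma(i)-i-1)$ of the resulting permanent is a product of consecutive integers and of factors bounded by the range of $\sigma$, hence nonnegative, with the cyclic shift $\sigma(i)=i-1$ contributing a strictly positive term.

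The genuine gap is the range $n\ge 3$, $d\ge n+2$, which you explicitly leave open. The idea you are missing is that $f_0$ need not involve all $n+1$ variables: the paper composes with the projection $\phi(x_1,\dots,x_{n+1})=(x_1,x_2,x_3)$ and uses the $n=2$ construction verbatim for every $n$, so no new sign analysis is needed. In your dual language this is even more transparent: specialize all $d$ points into a $3$-dimensional subspace $W\subseteq\mathbb{F}^{n+1}$; then $\tilde{P}_1\cdots\tilde{P}_d$ and every $\tilde{P}_i^d$ lie in $\operatorname{Sym}^dW\subseteq\operatorname{Sym}^d\mathbb{F}^{n+1}$, so the non-membership you already proved for $n=2$ hands you a witness configuration for all $n$, and openness does the rest. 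Your fallback induction, by contrast, does not close as stated: Leibniz gives $D(f'M)=\sum_{j=1}^{d}M(\tilde{P}_j)\,D_jf'$ with $D_j=\prod_{i\ne j}\tilde{P}_i\cdot\nabla$, and the only coefficient the induction hypothesis controls, namely $D_df'$, is multiplied by $M(\tilde{P}_d)=0$; you would need nonvanishing of some $D_jf'$ with $j<d$, which involves $\tilde{P}_d$ and is not supplied by the hypothesis. A minor slip: your $L_i(x)=\det[x,\tilde{P}_i,\tilde{P}_{i+1},\tilde{Q}_1,\dots,\tilde{Q}_{n-1}]$ has $n+2$ columns of vectors in $\mathbb{F}^{n+1}$; you want $n-2$ auxiliary points, not $n-1$.
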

\begin{proof}
Define the polynomial $f_0$ in $\mathbb{F}[x_1,\ldots, x_{n+1}]$ by $$f_0(v)=\prod_{i=1}^{d}\det[\phi(v), \phi(\tilde{P}_i) ,\phi(\tilde{P}_{i+1})],$$ where the indices are taken modulo $d$ and the projection $\phi:\mathbb{F}^{n+1}\rightarrow \mathbb{F}^3$ is defined by $\phi(x_1,\ldots,x_{n+1})=(x_1,x_2,x_3)$, then $f_0$ vanishes on $S$.

To derive the result $$\left(\prod_{i=1}^d \tilde{P}_i\cdot\nabla\right)f_0\neq 0$$ for generic $a_i$, $b_i$ and $c_i$, it suffices to show the left hand side is nonzero in case $\tilde{P}_i=(1,i,i^2,\ldots,i^n)\in\mathbb{Q}^{n+1}\subseteq\mathbb{F}^{n+1}$ for each integer $i$ with $1\le i\le d$. In this case, by Vandermonde determinant identity, we have
\begin{align*}
&\left(\prod_{i=1}^d \tilde{P}_i\cdot\nabla\right)f_0
\\=&\sum_{\sigma \in \mathcal{S}_d}\prod_{i=1}^{d}(\tilde{P}_{\sigma(d)}\cdot\nabla)\det[\phi(v), \phi(\tilde{P}_i) ,\phi(\tilde{P}_{i+1})]
\\=&\sum_{\sigma \in \mathcal{S}_d}\prod_{i=1}^{d}\det[\phi(\tilde{P}_{\sigma(d)}), \phi(\tilde{P}_i) ,\phi(\tilde{P}_{i+1})]
\\=&\sum_{\sigma \in \mathcal{S}_d}(d-1)(d-\sigma(d))(\sigma(d)-1)\prod_{i=1}^{d-1} (\sigma(i)-i)(\sigma(i)-i-1)
\\>&0
\end{align*}
as rational numbers. The strictness of the last inequality comes from the fact that since $d\ge 3$ we get a positive term by taking $\sigma(i)=i-1$ modulo $d$.
\end{proof}
By Lemma \ref{relation} and Lemma \ref{independence}, we obtain the following theorem.
\begin{theorem}\label{odd}
For each odd integer $d\ge 3$, we have $$\dim V(n,d)\le \binom{d+n}{n}-d-1.$$
\end{theorem}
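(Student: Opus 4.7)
The plan is to assemble the two preceding lemmas into the codimension bound. The key observation is that the differential operator
\[
D=\prod_{i=1}^d \tilde{P}_i\cdot\nabla
\]
has order exactly $d$, so when applied to any homogeneous polynomial of degree $d$ it produces a scalar in $\mathbb{F}$. Thus $D$ descends to a linear functional on the space $W$ of all homogeneous degree-$d$ polynomials vanishing on $S$, which has the expected dimension $\binom{d+n}{n}-d$.

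Next I would invoke Lemma \ref{relation}, which asserts that $Df=0$ for every cone polynomial $f$. Consequently the entire span $V(n,d)$ sits inside the kernel of the restricted functional $D|_W$. The point of Lemma \ref{independence} is precisely to witness that this kernel is a proper subspace of $W$: the polynomial $f_0$ constructed via the Vandermonde-like determinantal product lies in $W$ but satisfies $Df_0\neq 0$, so $D|_W$ is a nonzero linear functional.

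Combining these two inputs, $V(n,d)\subseteq \ker(D|_W)$ and $\ker(D|_W)$ has codimension exactly one in $W$, giving the desired bound
\[
\dim V(n,d)\le \dim W - 1 = \binom{d+n}{n}-d-1.
\]
There is no genuine obstacle beyond verifying that $D$ indeed defines a scalar-valued functional on degree-$d$ polynomials and that the two lemmas are being applied in the same ambient space $W$; the theorem is essentially a one-line consequence of Lemmas \ref{relation} and \ref{independence}.
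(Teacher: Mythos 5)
Your proposal is correct and is essentially the paper's own argument: the paper derives Theorem \ref{odd} directly by combining Lemma \ref{relation} (cone polynomials lie in the kernel of the functional $\prod_{i=1}^d \tilde{P}_i\cdot\nabla$ on the $\binom{d+n}{n}-d$-dimensional space of degree-$d$ forms vanishing on $S$) with Lemma \ref{independence} (that functional is nonzero on this space). You have merely spelled out the linear-algebra step that the paper leaves implicit.
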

\section{The second linear restriction}\label{second-part}
In this section, we assume $n=2$. Let $(y_1,y_2,y_3)$ denote the coordinates of $\tilde{Q}_1$. We view $\prod_{i=1}^d l_i$ as a polynomial in $\mathbb{F}[x_1,x_2,x_3][y_1,y_2,y_3]$, then it is homogeneous of degree $d$. Additionally, the coefficients of $\prod_{i=1}^d l_i$ are homogeneous polynomials of degree $d$ in $\mathbb{F}[x_1,x_2,x_3]$.

Since the evaluation of $\prod_{i=1}^d l_i$ at each point $(y_1,y_2,y_3)$ in $\mathbb{F}^3$ gives a cone polynomial, by applying difference operations, for each triple $(\alpha_1, \alpha_2, \alpha_3)$ of nonnegative integers with $\alpha_1+\alpha_2+\alpha_3=d$, the coefficient of the $y_1^{\alpha_1}y_2^{\alpha_2}y_3^{\alpha_3}$ term in $\prod_{i=1}^d l_i$ is the span of cone polynomials.

Conversely, we prove every cone polynomial is in the span of the coefficients of $\prod_{i=1}^d l_i$. As in Section \ref{first-part}, if $Q_1\not\in S$ then a cone polynomial $\prod_{i=1}^d L_i$ is a scalar multiple of the evaluation of $\prod_{i=1}^d l_i$ at $(y_1,y_2,y_3)$, and thereby it is in the span of the coefficients of $\prod_{i=1}^d l_i$. Otherwise, a cone polynomial is the evaluation of a directional derivative of $\prod_{i=1}^d l_i$ at $(y_1,y_2,y_3)$, hence it is also in the span of the coefficients of $\prod_{i=1}^d l_i$.

Consider a matrix $M(d)$ whose rows and columns are indexed by the tuples $(\alpha_1, \alpha_2, \alpha_3)$ of nonnegative integers with $\alpha_1+\alpha_2+\alpha_3=d$. Let the entry in the row $(\alpha_1, \alpha_2, \alpha_3)$ and the column $(\beta_1,\beta_2,\beta_3)$ of $M(d)$ be the coefficient of the $x_1^{\alpha_1} x_2^{\alpha_2} x_3^{\alpha_3}\cdot y_1^{\beta_1} y_2^{\beta_2} y_3^{\beta_3}$ term in $\prod_{i=1}^d l_i$ as a polynomial in $\mathbb{F}[x_1,x_2,x_3,y_1,y_2,y_3]$. Then the dimension of $V(2,d)$ equals the rank of $M(d)$, and $M(d)$ is skew-symmetric if $d$ is odd. Because the rank of a skew-symmetric matrix is even, the dimension of $V(2,d)$ is even if $d$ is odd.
%\cite[Chapter XV, Theorem 8.1]{lang}

Therefore, by Theorem \ref{odd}, we obtain following theorem.

\begin{theorem}\label{1mod4}
For each integer $d$ with $d=1\pmod 4$ and $d\ge 5$, we have $$\dim V(2,d)\le\binom{d+2}{2}-d-2.$$
\end{theorem}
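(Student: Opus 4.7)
The plan is to combine Theorem \ref{odd} with the parity restriction on $\dim V(2,d)$ that has just been established in the preceding paragraphs of this section. Specifically, the excerpt identifies $\dim V(2,d) = \rank M(d)$ and shows that $M(d)$ is skew-symmetric when $d$ is odd, so its rank is an even nonnegative integer. The theorem will then follow from a simple parity mismatch with Theorem \ref{odd}'s bound.

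First I would carry out the short parity computation. One checks
\[
\binom{d+2}{2}-d = \frac{(d+1)(d+2)}{2}-d = \frac{d^2+d+2}{2}.
\]
Writing $d=4k+1$ with $k\ge 1$, this expression becomes $8k^2+6k+2$, which is even. Consequently the upper bound $\binom{d+2}{2}-d-1$ provided by Theorem \ref{odd} is an odd integer in this residue class.

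Then I would conclude at once: Theorem \ref{odd} gives the inequality $\dim V(2,d)\le \binom{d+2}{2}-d-1$, and the left-hand side is forced to be even by the skew-symmetry of $M(d)$. Since an even nonnegative integer cannot equal an odd integer, the inequality must be strict, yielding the desired bound $\dim V(2,d)\le \binom{d+2}{2}-d-2$.

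The only delicate ingredient is the skew-symmetry of $M(d)$, but this has already been carried out just before the theorem statement: for $n=2$ the factor $l_i(v)=\det[v,\tilde{P}_i,\tilde{Q}_1]$ changes sign when the role of $v$ (carrying the $x$-variables) is exchanged with $\tilde{Q}_1$ (carrying the $y$-variables), so the product $\prod_{i=1}^d l_i$ picks up the sign $(-1)^d=-1$, which is precisely the skew-symmetry condition on the coefficient matrix. With this in hand, no additional machinery is needed beyond the parity calculation above.
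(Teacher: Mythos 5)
Your proposal is correct and follows exactly the paper's own route: the section establishes $\dim V(2,d)=\rank M(d)$ with $M(d)$ skew-symmetric for odd $d$, hence $\dim V(2,d)$ is even, and the parity mismatch with the odd bound $\binom{d+2}{2}-d-1$ from Theorem \ref{odd} forces the improved inequality. Your explicit computation that $\binom{d+2}{2}-d$ is even for $d\equiv 1\pmod 4$ is a useful detail the paper leaves implicit, but the argument is the same.
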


\section*{acknowledgments}
We thank Professor Robert Lazarsfeld for introducing us this interesting problem and giving us much advice.


\begin{thebibliography}{9}
\bibitem{ciliberto}Ciliberto, Ciro. "Geometric aspects of polynomial interpolation in more variables and of Waring’s problem." European Congress of Mathematics. Birkhäuser Basel, 2001.
%\bibitem{lang}Lang, Serge. "Algebra revised third edition." GRADUATE TEXTS IN MATHEMATICS-NEW YORK- (2002).
\end{thebibliography}
\end{document}